\documentclass{article}
\usepackage[utf8]{inputenc}
\usepackage{amsthm,amssymb}
\usepackage{amsmath}
\usepackage{latexsym}
\usepackage{url}
\usepackage[small,bf,hang]{caption} 
\usepackage{algorithm}
\usepackage{algorithmic}
\usepackage[pdftex]{graphicx}

\usepackage{etoolbox}
\makeatletter
\patchcmd{\@sect}{#8}{\boldmath #8}{}{}\patchcmd{\@sect}{#8}{\boldmath #8}{}{}

\patchcmd{\@sect}{#8}{\boldmath #8}{}{}

\let\ori@chapter\@chapter
\def\@chapter[#1]#2{\ori@chapter[\boldmath#1]{\boldmath#2}}
\makeatother

\newtheorem{problem}{Problem}
\newtheorem{theorem}{Theorem}
\newtheorem{lemma}[theorem]{Lemma}

\title{\bf On Some Generalized\\
Vertex Folkman Numbers}

\author{Zohair Raza Hassan\\[-0.2ex]
\small Department of Computer Science, {\tt zh5337@rit.edu}\\[-0.6ex]
\small Rochester Institute of Technology, Rochester, NY 14623, USA\\[1.0ex]\and
Yu Jiang\\[-0.3ex]
\small College of Electronics and Information Engineering, {\tt wdjiangyu@126.com}\\[-0.6ex]
\small Beibu Gulf University, Qinzhou 535011, P.R. China\\[1.0ex]\and
David E. Narváez\\[-0.3ex]
\small Computer Science Department, {\tt david.narvaez@rochester.edu}\\[-0.6ex]
\small University of Rochester, Rochester, NY 14627, USA\\[1.0ex]\and
Stanis{\l}aw Radziszowski\\[-0.3ex]
\small Department of Computer Science, {\tt spr@cs.rit.edu}\\[-0.6ex]
\small Rochester Institute of Technology, Rochester, NY 14623, USA\\[1.0ex]\and
Xiaodong Xu\\[-0.3ex]
\small Guangxi Academy of Sciences, {\tt xxdmaths@sina.com}\\[-0.6ex]
\small Nanning 530007, P.R. China\\[1ex]
}

\date{November 14, 2022}
\begin{document}
\maketitle
\thispagestyle{empty}

\begin{abstract}
For a graph $G$ and integers $a_i\ge 1$,
the expression $G \rightarrow (a_1,\dots,a_r)^v$ means that
for any $r$-coloring of the vertices of $G$ there exists a monochromatic
$a_i$-clique in $G$ for some color $i \in \{1,\cdots,r\}$.
The vertex Folkman numbers are defined as
$F_v(a_1,\dots,a_r;H) = \min\{|V(G)| : G$ is $H$-free and
$G \rightarrow (a_1,\dots,a_r)^v\}$, where $H$ is a graph.
Such vertex Folkman numbers have been extensively studied
for $H=K_s$ with $s>\max\{a_i\}_{1\le i \le r}$.
If $a_i=a$ for all $i$, then we use notation
$F_v(a^r;H)=F_v(a_1,\dots,a_r;H)$.

Let $J_k$ be the complete graph $K_k$ missing one edge,
i.e. $J_k=K_k-e$.
In this work we focus on vertex Folkman numbers with $H=J_k$,
in particular for $k=4$ and $a_i\le 3$.
A result by Ne\v{s}et\v{r}il and R\"{o}dl from 1976
implies that $F_v(3^r;J_4)$ is well defined for any $r\ge 2$.
We present a new and more direct proof of this fact.
The simplest but already intriguing case is
that of $F_v(3,3;J_4)$, for which we establish the upper
bound of 135 by using the $J_4$-free process. We obtain the exact values and bounds
for a few other small cases of $F_v(a_1,\dots,a_r;J_4)$ when
$a_i \le 3$ for all $1 \le i \le r$, including
$F_v(2,3;J_4)=14$,
$F_v(2^4;J_4)=15$, and
$22 \le F_v(2^5;J_4) \le 25$.
Note that $F_v(2^r;J_4)$ is the smallest number of vertices
in any $J_4$-free graph with chromatic number $r+1$.
Most of the results were obtained with the help of
computations, but some of the upper bound graphs we found
are interesting by themselves.
\end{abstract}
\noindent
{\bf Keywords:}
Folkman number, vertex coloring\\
{\bf AMS classification subjects:} 05C55

\section{Introduction}

\subsection{Notation and Background}

All graphs considered in this paper are finite undirected
simple graphs. The order of the largest independent set in
graph $G$ will be denoted by $\alpha(G)$,
and the chromatic number of $G$ by $\chi(G)$.
Let $J_k$ be the complete graph $K_k$ missing one edge,
i.e., $J_k=K_k-e$. Note that $J_3$ is the path on 3 vertices,
and the diamond graph $J_4$ is formed by two triangles
sharing an edge.

For a graph $G$ and integers $a_1,\dots,a_r$,
such that $a_i\ge 1$ for $1 \le i \le r$,
the expression $G \rightarrow (a_1,\dots,a_r)^v$
means that for any $r$-coloring of the vertices of
$G$ there exists a monochromatic $a_i$-clique in $G$
for some color $i \in \{1,\cdots,r\}$.
In this paper, we call this property vertex-arrowing,
or simply arrowing.
It should be noted that an analogous
edge-arrowing property $G \rightarrow (a_1,\dots,a_r)^e$
is the basis of widely studied Ramsey edge-arrowing
problems. In particular, the Ramsey number
$R(a_1,\dots,a_r)$ is defined as the smallest integer $n$
such that $K_n \rightarrow (a_1,\dots,a_r)^e$.
All our results address vertex-arrowing,
but in some places we will refer to edge-arrowing
for comparison, context, or when used as a tool.
The vertex Folkman numbers $F_v$ are defined as
$$F_v(a_1,\dots,a_r;H) = \min\{|V(G)| : G \mathrm{\ is\ }
H\mathrm{-free\ and\ }
G \rightarrow (a_1,\dots,a_r)^v\},$$
where $H$ is a graph and $G$ is called $H$-free if it does not contain $H$ as a subgraph (in contrast to a commonly used definition of $H$-free where it is required that $G$ does not contain $H$ as an induced subgraph).
If $a_i=a$ for all $i$, then we use a more compact
notation, $F_v(a^r;H)=F_v(a_1,\dots,a_r;H)$.
The set of all $H$-free
graphs satisfying the arrowing
$G \rightarrow (a_1,\dots,a_r)^v$
will be denoted by
$\mathcal{F}_v(a_1,\dots,a_r;H)$ and we will call it
the set of Folkman
graphs for the corresponding parameters. Further,
$\mathcal{F}_v(a_1,\dots,a_r;H;n)$ will denote a subset of
the latter when restricted to graphs on $n$ vertices. 

Let us set $m=1+\sum_{i=1}^{r}(a_i-1)$. We will often use
the following lemma by Nenov \cite{Nen00} stating a simple
necessary condition for vertex arrowing to hold.
\begin{lemma}(Nenov 1980 \cite{Nen00})
\label{l:chromatic}
If $G \rightarrow (a_1,\dots,a_r)^v$, then $\chi(G) \ge m$.
\end{lemma}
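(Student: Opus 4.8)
The plan is to prove the contrapositive: assuming $\chi(G) \le m-1 = \sum_{i=1}^{r}(a_i-1)$, I will exhibit an $r$-coloring of $V(G)$ that contains no monochromatic $a_i$-clique in color $i$ for any $i$, thereby witnessing $G \not\rightarrow (a_1,\dots,a_r)^v$. This immediately yields the desired implication, since arrowing forces the failure of this construction.

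First I would fix an optimal proper coloring of $G$, which partitions $V(G)$ into $\chi(G)$ independent sets $C_1,\dots,C_{\chi(G)}$. The crucial observation is that, because each $C_j$ is independent, any clique of $G$ meets each $C_j$ in at most one vertex; hence a clique contained in the union of $t$ of these classes has at most $t$ vertices. This is the only place where the independence of the proper color classes is used, and it is what converts a bound on the number of classes into a bound on clique size.

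Next I would distribute the $\chi(G)$ color classes among $r$ bins, placing at most $a_i-1$ of them into bin $i$. This is possible precisely because the total capacity $\sum_{i=1}^{r}(a_i-1)=m-1$ is at least $\chi(G)$ under our assumption. I then define an $r$-coloring of $V(G)$ by assigning to each vertex the index $i$ of the bin containing its proper class. By the observation above, the set of vertices receiving color $i$ is a union of at most $a_i-1$ independent sets, so the largest clique it can contain has at most $a_i-1 < a_i$ vertices. Thus no color class $i$ hosts a monochromatic $a_i$-clique, contradicting the arrowing and completing the proof.

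I expect no serious obstacle: the argument is a direct pigeonhole/counting construction built on a single structural observation. The only points requiring care are the bookkeeping that $m-1 = \sum(a_i-1)$ is exactly the total bin capacity (so the distribution step is feasible), and the clean statement of the clique bound that follows from each class being independent.
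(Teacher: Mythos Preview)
Your argument is correct and is precisely the standard proof: assuming $\chi(G)\le m-1=\sum_{i=1}^r(a_i-1)$, you group the proper color classes into $r$ blocks of sizes at most $a_1-1,\dots,a_r-1$, and since a union of $t$ independent sets has clique number at most $t$, no block $i$ contains a $K_{a_i}$, contradicting $G\rightarrow(a_1,\dots,a_r)^v$. The paper itself does not supply a proof of this lemma; it simply states it with attribution to Nenov~\cite{Nen00}, so there is nothing further to compare against.
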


Observe that if $a_i=2$ for all $i$, $1 \le i \le r$,
then the converse is also true.
In the
terms of Folkman numbers, this means that $F_v(2^r;H)$
is equal to the smallest number of vertices in any
$H$-free graph $G$ with $\chi(G)=r+1=m$. Note also that if $a_i=1$ for any color $i$, then essentially this color can be disregarded.

The vertex Folkman numbers have been extensively studied
when the avoided graph is complete, i.e., when $H=K_s$
(see \cite{Folk70,Nen84,LRU01,DR11,Fv445,Nen03,Nen09}).
They are well defined
when $s>\max\{a_i\ |\ 1\le i \le r\}$, since it is known
that for such $s$ the minimum in the definition
ranges over a nonempty set of graphs.
The situation is easy for $s \ge m$. Moreover, much is known
about vertex Folkman numbers and the corresponding Folkman graphs
when $s$ is close to, but less than, $m$. However, even
some of the basic questions become difficult for small $s$,
such as $s=3$ or $s=4$. One of the famous problems
which can be stated in these terms is the task of
finding the smallest
triangle-free graph with given chromatic number $r$, which
is equal to $F_v(2^{r-1};K_3)$. See the following subsection
for references and more details about this problem
in relation to our current work.
A recent Ph.D. thesis by Bikov \cite{Bik18} presents
a variety of Folkman problems, focusing on a computational
approach together with the known values and bounds for
Folkman numbers.

For graphs $F$ and $G$, the Ramsey number $R(F,G)$ is the smallest
integer $n$ such that if the edges of $K_n$ are 2-colored,
say red and blue, then necessarily this coloring includes
a copy of red-colored $F$ or a copy of blue-colored $G$.
If $F$ and $G$ are complete graphs, then we write
$R(s,t)$ instead of $R(K_s,K_t)$. A regularly updated survey
{\em Small Ramsey Numbers} \cite{SRN} contains the known bounds
and values for a variety of Ramsey numbers.

In this work we focus on the vertex Folkman numbers for
graphs avoiding $H=J_k$, in particular for $k=4$ and
$2 \le a_i\le 3$. Note that this special case of $J_4$-free
graphs admits some triangles, but not too many, since
in $J_4$-free graphs each edge can belong to at most
one triangle. We observe that avoiding $J_4$ falls
in-between the two extensively studied classical cases of
avoiding $K_3$ and $K_4$.
Also for $H=J_4$, we see
that $F_v(2^r;J_4)$ is the smallest number of vertices
in any $J_4$-free graph with chromatic number $r+1$.
These numbers are clearly well defined since
any $K_3$-free graph is also $J_4$-free, and
$F_v(2^r;K_3)$ is well defined for every $r \ge 1$.
Furthermore, the classical results for multicolor
Folkman numbers by Ne\v{s}et\v{r}il and R\"{o}dl
\cite{NR76,NR81} guarantee the existence of
$F_v(3^r;K_s)$ for $s \ge 4$ and of $F_v(3,3;J_4)$,
while little less known results by
Ne\v{s}et\v{r}il and R\"{o}dl \cite{NR76a}
and R\"{o}dl and Sauer \cite{RS92} imply also
the existence of $F_v(3^r;J_4)$ with $r>2$.

\subsection{Summary of Contributions}
\label{s:background}

This subsection summarizes our contributions.
They consist mainly of new lower and upper bounds
for some concrete Folkman numbers, which were obtained
with the help of computations (Theorems 3--7),
but also new interesting graph constructions. Several special
graphs were found during our computations, 
but we think that some of them (see Figures 1--4) are interesting just by themselves.

We start with Theorem \ref{t:3rj4}, which is theoretical.
This result is not new, as it is implied
by more general old results by Ne\v{s}et\v{r}il and R\"{o}dl
\cite{NR76,NR76a,NR81}, and more directly it follows
from Theorem 1.2 in the work
by R\"{o}dl and Sauer \cite{RS92}. In the next section,
we include our proof of Theorem \ref{t:3rj4}
addressing directly the case of avoiding $J_4$.

\begin{theorem}
\label{t:3rj4}
$F_v(3^r;J_4)$ is well defined for all $r \ge 1$.
\end{theorem}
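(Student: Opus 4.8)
The goal is to produce, for each $r$, a single $J_4$-free graph $G$ with $G \rightarrow (3^r)^v$; equivalently, a $J_4$-free graph in which every $r$-colouring of the vertices yields a monochromatic copy of $K_3$. The case $r=1$ is immediate, since $K_3$ itself is $J_4$-free and $K_3 \rightarrow (3)^v$, giving $F_v(3;J_4)=3$. The natural temptation for larger $r$ is a cone (join) reduction of the kind used in the $K_s$-free theory, where one adjoins a vertex adjacent to all of a smaller arrowing graph. This is unavailable here: a vertex joined to all of a graph $H$ places two triangles on every edge incident to it, and hence creates a $J_4$ as soon as $H$ has a vertex of degree at least two. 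So the construction must force monochromatic triangles without ever gluing two triangles along a common edge.

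The plan is to use a partite amalgamation in the spirit of Ne\v{s}et\v{r}il and R\"{o}dl, which is well suited to this constraint. I would fix the triangle $K_3$ as the pattern to be made monochromatic and build a multipartite graph whose parts play the role of coordinates, populate it with a controlled system of copies of $K_3$, and then amalgamate copies of the whole system over their projections onto fewer parts. The decisive structural feature is that in a multipartite graph every part is an independent set, so each amalgamation identifies the two pieces along an independent interface and adds no edges between their private vertices; that is, every gluing is a \emph{free} amalgamation over an independent set.

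Free amalgamation over an independent interface is exactly what keeps the construction $J_4$-free, and this is the point I would verify first. Because $J_4$ is $2$-connected and its only non-adjacent pair is the missing edge, a copy of $J_4$ meeting the private parts of both pieces would force two \emph{adjacent} interface vertices, which is impossible when the interface is independent; hence every copy of $J_4$, and indeed every triangle, must lie inside a single amalgamated piece. Consequently no new copy of $J_4$ is ever introduced and the triangles of $G$ remain edge-disjoint. The base of the amalgamation is the partite lemma, supplying a $J_4$-free partite graph that arrows the pattern within a single ``row''; iterating the amalgamation over the projections then boosts this to the full vertex-arrowing $G \rightarrow (3^r)^v$.

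I expect the main obstacle to be precisely the interplay between forcing and $J_4$-freeness in the amalgamation step. Taking disjoint copies of a smaller graph does not suffice, since the colour-$r$ class can be large and triangle-free yet still meet every copy; the partite construction overcomes this by making the copies overlap in a structured way. But overlaps are permitted only along independent sets, so no triangle may ever be shared between two pieces, and the monochromatic triangle that is ultimately forced must be produced entirely inside one piece. Reconciling ``enough overlap to force a triangle'' with ``overlap only on independent sets'' is the crux; once the partite lemma is established for the triangle pattern inside the class of $J_4$-free graphs and the amalgamation is checked to preserve $J_4$-freeness, the induction on the number of parts completes the proof and reproves directly, for $H=J_4$, the relevant instance of the R\"{o}dl--Sauer partition theorem with pattern $K_3$.
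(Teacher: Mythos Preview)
Your outline is sound in spirit---this is essentially the Ne\v{s}et\v{r}il--R\"odl/R\"odl--Sauer route that the paper itself cites as already implying the theorem---but it is \emph{not} the proof the paper gives. The paper avoids the partite machinery entirely and instead reduces the vertex-$J_4$ problem to the edge-$K_4$ problem via the Dudek--R\"odl construction: for a graph $G$, let $DR(G)$ have vertex set $E(G)$ and place a triangle on $\{e,f,g\}$ whenever $e,f,g$ are the three edges of a triangle in $G$. The short Lemma~\ref{l:DRG} shows that when $G$ is $K_4$-free, $DR(G)$ has no spurious triangles and is $J_4$-free (two image triangles can share a vertex but never an edge), and that $G \rightarrow (3^r)^e$ iff $DR(G) \rightarrow (3^r)^v$. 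Then one simply invokes $\mathcal{F}_e(3^r;K_4)\neq\emptyset$ from~\cite{NR76} and is done.

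What each approach buys: the paper's argument is much shorter, completely explicit, and immediately converts any bound on $F_e(3^r;K_4)$ into a bound on $F_v(3^r;J_4)$, whereas your route is more self-contained (it does not pass through edge arrowing) but requires you to actually run the partite lemma and amalgamation inside the class of $J_4$-free graphs. Your key structural observation---that the only $2$-cut of $J_4$ is the shared edge $\{a,b\}$, so a free amalgam over an independent set cannot create a new $J_4$---is correct and is precisely why the R\"odl--Sauer machinery applies; but note that your write-up still leaves the partite lemma itself (producing a $J_4$-free partite system that arrows the triangle) as an IOU, so as a standalone proof it is incomplete where the paper's is not.
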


The $J_4$-free graphs satisfying the required arrowing
property in Theorem~\ref{t:3rj4} quickly become very large
as $r$ grows.
The simplest, but already intriguing case, is
that of $F_v(3,3;J_4)$, for which we establish the upper
bound of 135 in Theorem~\ref{t:33j4}. Note that, by monotonicity,
Theorem~\ref{t:3rj4} implies the existence of Folkman numbers
of the form $F_v(a_1,\dots,a_r;J_4)$ with
$a_i \le 3$ for all $1 \le i \le r$.

A $J_4$-free graph $G$ is called {\em maximal}
$J_4$-{\em free}
if the addition of any edge creates a $J_4$ in $G$. A graph
$G$ for which $G \rightarrow (a_1,\dots,a_r)$
is called {\em minimal} if after the deletion of any edge
this arrowing does not hold. If $G$ is maximal and minimal, it is referred to as {\em bicritical}.
Using computational methods,
we obtain the exact values and bounds
for several small cases,
as stated in Theorems~\ref{t:222j4}--\ref{t:33j4} below.

\begin{theorem}
\label{t:222j4}
$F_v(2^3;J_4)=9$, and there are exactly $3$ graphs in
$\mathcal{F}_v(2^3;J_4;9)$, of which $1$ is maximal
and $1$ is minimal.
\end{theorem}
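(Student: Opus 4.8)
The plan is to first translate the arrowing condition into a purely chromatic one, and then split the argument into a lower bound, an upper bound, an exact enumeration, and the maximal/minimal verification. Since $m = 1 + 3(2-1) = 4$, Lemma~\ref{l:chromatic} gives that $G \rightarrow (2,2,2)^v$ forces $\chi(G) \ge 4$, while the remark following the lemma supplies the converse when all $a_i = 2$. Hence for any graph $G$ we have $G \rightarrow (2,2,2)^v$ exactly when $\chi(G) \ge 4$, so $\mathcal{F}_v(2^3;J_4;n)$ is precisely the set of $J_4$-free $n$-vertex graphs with $\chi \ge 4$, and $F_v(2^3;J_4)$ is the minimum order of a $J_4$-free graph with chromatic number at least $4$. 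Throughout I will use the local characterization of $J_4$-freeness: since $J_4$ is two triangles sharing an edge, $G$ is $J_4$-free if and only if no edge lies in two triangles, equivalently for every vertex $v$ the induced subgraph $G[N(v)]$ has maximum degree at most $1$ (it is an induced matching). In particular every $J_4$-free graph is $K_4$-free.

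For the lower bound $F_v(2^3;J_4) \ge 9$, the key reduction is to critical graphs. Any graph $G$ with $\chi(G) \ge 4$ contains a $4$-critical subgraph $H$ (delete vertices and edges as long as $\chi$ stays $\ge 4$); since deleting vertices or edges preserves $J_4$-freeness, $H$ is $J_4$-free and $|V(H)| \le |V(G)|$. It therefore suffices to show that no $J_4$-free $4$-critical graph has at most $8$ vertices. I would run through the (finite and short) catalogue of all $4$-critical graphs on at most $8$ vertices and verify that each one contains a $J_4$; for instance the odd wheel $W_5$ and the Moser spindle are built from pairs of triangles sharing an edge and so are not $J_4$-free. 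Since every candidate is ruled out, no $J_4$-free graph on at most $8$ vertices has $\chi \ge 4$, giving the bound.

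For the upper bound and the count, I would exhibit an explicit $9$-vertex graph $G_0$ that is $J_4$-free (each neighborhood an induced matching) and has $\chi(G_0) = 4$ (no proper $3$-coloring), which already yields $F_v(2^3;J_4) \le 9$ and hence equality. To get the exact number of Folkman graphs, I would generate one representative of every isomorphism class of $9$-vertex graphs (e.g.\ with an isomorph-free generator such as \texttt{nauty}), keep those that are $J_4$-free by testing the neighborhood-matching condition, and discard the $3$-colorable ones (by a direct coloring routine or a SAT check). The claim is that exactly three graphs survive, which simultaneously confirms the upper bound and the enumeration part of the statement.

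Finally, for each of the three surviving graphs I would test the two extremal properties. Maximality is checked by adding each missing edge and testing whether a $J_4$ appears, i.e.\ whether the addition creates a vertex of degree $2$ in some neighborhood; minimality is checked by deleting each edge and testing whether the resulting graph becomes $3$-colorable, i.e.\ whether the arrowing is lost. The statement predicts that exactly one of the three is maximal and exactly one is minimal. The main obstacle is establishing completeness of the two exhaustive steps: the lower bound depends on having the full list of small $4$-critical graphs (and correctly recognizing the $J_4$ in each), and the count of three depends on the generation of $9$-vertex $J_4$-free graphs being both exhaustive and isomorph-free; the chromatic tests themselves are routine once the search space is pinned down.
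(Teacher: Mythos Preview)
Your proposal is correct and largely mirrors the paper's approach: the paper also reduces the arrowing to $\chi \ge 4$ via Lemma~\ref{l:chromatic}, then uses \texttt{nauty}'s \texttt{geng} to enumerate all $J_4$-free graphs on up to 9 vertices and filter by chromatic number, finding none with $\chi=4$ on $n\le 8$ and exactly three on $n=9$ (Table~\ref{t:smallcases}), with the maximal/minimal checks done afterwards on those three graphs. The only substantive difference is your lower-bound step: the paper simply runs the same exhaustive $J_4$-free enumeration on $n\le 8$ and observes the $\chi=4$ column is empty, whereas you reduce to the catalogue of 4-critical graphs on at most 8 vertices and verify each contains a $J_4$. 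Both are finite computational checks and equally valid; your critical-graph route inspects far fewer graphs and is conceptually cleaner, but it depends on an external catalogue being complete, while the paper's direct enumeration is self-contained within the very \texttt{geng} run that also produces the three 9-vertex witnesses.
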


\begin{theorem}
\label{t:2222j4}
$F_v(2^4;J_4)=15$, and there are exactly $5$ graphs in
$\mathcal{F}_v(2^4;J_4;15)$, of which $1$ is maximal
and $2$ are minimal.
\end{theorem}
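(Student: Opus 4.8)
The plan is to treat this as a finite search problem, exploiting the observation following Lemma~\ref{l:chromatic}: since all $a_i=2$, the arrowing $G\to(2^4)^v$ is equivalent to $\chi(G)\ge 5$. Thus $\mathcal{F}_v(2^4;J_4;n)$ is precisely the set of $J_4$-free graphs on $n$ vertices with $\chi\ge 5$, and $F_v(2^4;J_4)$ is the least order of such a graph. The first reduction I would make is that any minimum-order example must be $5$-vertex-critical: if $G$ is a fewest-vertex $J_4$-free graph with $\chi(G)\ge 5$ and some $G-v$ still had $\chi\ge 5$, then $G-v$ (still $J_4$-free, being a subgraph) would be a smaller example. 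Hence $\chi(G-v)\le 4$ for every $v$, which forces $\chi(G)=5$ and makes $G$ vertex-critical; in particular $\delta(G)\ge 4$, so every member of $\mathcal{F}_v(2^4;J_4;15)$ has at least $30$ edges. I would also record the equivalent description of $J_4$-freeness that drives the pruning: $G$ is $J_4$-free iff no edge lies in two triangles. These structural facts are what make the search tractable.

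For the lower bound $F_v(2^4;J_4)\ge 15$ I would show that no $J_4$-free graph on at most $14$ vertices has $\chi\ge 5$. By the same criticality argument, any such graph would contain a $5$-vertex-critical $J_4$-free subgraph on at most $14$ vertices with $\delta\ge 4$, so it suffices to generate, up to isomorphism, all $J_4$-free graphs on $n\le 14$ vertices with $\delta\ge 4$ and certify each is $4$-colorable. Generation would use an isomorph-free exhaustive procedure (canonical augmentation via \texttt{nauty}), with the ``no edge in two triangles'' predicate enforced as a pruning test during construction, and $4$-colorability checked for each survivor either by a dedicated backtracking colorer or by a SAT encoding. Finding that every candidate is $4$-colorable establishes the bound.

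For the upper bound and the exact count, I would enumerate $\mathcal{F}_v(2^4;J_4;15)$ directly. By the first paragraph this set coincides with the set of $5$-vertex-critical $J_4$-free graphs on $15$ vertices, all having $\delta\ge 4$, so I would generate all $J_4$-free $15$-vertex graphs with $\delta\ge 4$ up to isomorphism and retain those with $\chi=5$ (these are automatically vertex-critical). Exhibiting even one such graph already gives $F_v(2^4;J_4)\le 15$, hence equality with the lower bound. The search should return exactly $5$ graphs. For each of them I would then run two inexpensive post-hoc tests: \emph{maximality}, by verifying that adding any non-edge creates a $J_4$ (equivalently, places an edge in a second triangle); and \emph{minimality} of arrowing, by verifying that deleting any edge drops the chromatic number to $4$. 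Tabulating these outcomes should exhibit exactly $1$ maximal and $2$ minimal graphs.

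The main obstacle is keeping the generation of $J_4$-free graphs with $\delta\ge 4$ on $14$ and $15$ vertices within feasible limits and, above all, guaranteeing \emph{exhaustiveness}: that no graph is missed and none is counted twice. The levers for pruning are the $\delta\ge 4$ bound, the $J_4$-free triangle constraint (which sharply caps edge density since each edge lies in at most one triangle), and further criticality-based edge-count bounds of Gallai type. A secondary worry is the reliability of the co-NP verdicts needed to certify $\chi\ge 5$, i.e.\ non-$4$-colorability, for the finitely many survivors. I would mitigate this by cross-checking both the chromatic verdicts and the final arrowing/non-arrowing claims with an independent implementation — for instance an exact colorability solver alongside a SAT-based arrowing test on the same graphs — so that the result rests on two independent computations rather than one.
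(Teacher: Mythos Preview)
Your plan is logically sound---the reduction to $5$-chromatic $J_4$-free graphs via Lemma~\ref{l:chromatic}, the vertex-criticality argument giving $\delta\ge 4$, and the post-hoc maximality/minimality checks are all correct and match the paper in spirit. The difference lies in the core search strategy. You propose direct isomorph-free generation of all $J_4$-free graphs with $\delta\ge 4$ on $n\le 15$ vertices, pruned by the ``no edge in two triangles'' predicate. The paper instead uses an \emph{extension} algorithm: by Lemma~\ref{lem:subgchi}, removing an independent set from a $5$-chromatic graph leaves a graph with $\chi\ge 4$, and since $R(J_4,K_4)=11$ every $J_4$-free graph on $\ge 11$ vertices has an independent set of size $\ge 4$. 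Thus every target on $15$ vertices is a $4$-vertex (or $3$-vertex) independent-set extension of one of the far fewer $J_4$-free graphs with $\chi\ge 4$ on $11$ (resp.\ $12$) vertices, already enumerated in Table~\ref{t:smallcases}. The paper then finds the single maximal $J_4$-free witness on $15$ vertices and obtains the remaining four by edge deletion, rather than by generating all $\delta\ge 4$ graphs.

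What this buys: the extension approach starts from a few thousand seed graphs rather than attempting a full $\delta\ge 4$ sweep at $n=14,15$, which the paper explicitly flags as prohibitively expensive even at $n=13$ without such structure. Your proposal is not wrong, but the feasibility concern you yourself raise is real, and the paper's Ramsey-number/independent-set extension idea is precisely the missing lever that makes the computation tractable and certifiably exhaustive.
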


\begin{theorem}
\label{t:22222j4}
$22 \le F_v(2^5;J_4) \le 25$.
\end{theorem}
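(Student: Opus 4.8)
The plan is to treat the two inequalities separately, using throughout the reformulation provided by Lemma~\ref{l:chromatic} and the remark following it: since $m=1+\sum_{i=1}^{5}(a_i-1)=6$ and all $a_i=2$, a graph $G$ satisfies $G\rightarrow(2^5)^v$ if and only if $\chi(G)\ge 6$. Hence $F_v(2^5;J_4)$ is exactly the minimum order of a $J_4$-free graph with chromatic number $6$. Two structural facts about $J_4$-free graphs $G$ will be used repeatedly: first, $G$ is $J_4$-free precisely when the neighborhood $N(v)$ of every vertex induces a subgraph of maximum degree at most $1$ (a matching together with isolated vertices), since two edges of $N(v)$ sharing a vertex would complete a diamond with $v$; and second, $J_4$-freeness implies $K_4$-freeness, so $\omega(G)\le 3$.

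For the upper bound $F_v(2^5;J_4)\le 25$ I would exhibit one explicit $J_4$-free graph $G_{25}$ on $25$ vertices with $\chi(G_{25})=6$. To locate such a graph I would run the $J_4$-free process already used to obtain the bound in Theorem~\ref{t:33j4}, and in parallel search small structured constructions seeded by the extremal graphs of Theorem~\ref{t:2222j4} (the $J_4$-free graphs on $15$ vertices with $\chi=5$). One cannot simply apply a Mycielski-type construction to raise the chromatic number, because the Mycielskian preserves $J_4$-freeness only for triangle-free inputs, and our $5$-chromatic graphs already contain triangles; a more careful gadget is therefore required, which is why the resulting graph is of independent interest. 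Once a candidate is found, verifying it is routine: $J_4$-freeness is checked by testing that every neighborhood induces a matching, $\chi\le 6$ is certified by an explicit $6$-coloring, and $\chi\ge 6$ is certified by a SAT or backtracking proof that no proper $5$-coloring exists.

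The lower bound $F_v(2^5;J_4)\ge 22$ is the substantive part, and amounts to proving that every $J_4$-free graph on at most $21$ vertices is $5$-colorable. I would first reduce to a critical configuration: a minimal counterexample $G$ may be assumed $6$-vertex-critical, so $\delta(G)\ge 5$, and by Brooks' theorem together with $\omega(G)\le 3$ one gets $\Delta(G)\ge 6$. Combining $\delta(G)\ge 5$ with the local matching structure of $J_4$-free graphs sharply constrains the possible neighborhoods, and standard edge counts for $6$-critical graphs (e.g.\ the Kostochka--Yancey bound) force a relatively dense, yet locally sparse, graph on few vertices. I would then carry out a constrained, isomorph-free computer search over $J_4$-free graphs on up to $21$ vertices satisfying these necessary conditions, checking in each case that a proper $5$-coloring exists, and thereby ruling out any $6$-chromatic example.

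The main obstacle is precisely this lower-bound search. The statement to be refuted, ``there exists a $J_4$-free graph on $n$ vertices with no proper $5$-coloring,'' is naturally a quantifier alternation (an existential over graphs wrapped around a co-$\mathrm{NP}$ non-colorability test), so a direct SAT encoding does not suffice and the candidate space on $21$ vertices is astronomically large. The feasibility of the argument rests on pruning this space hard: enforcing criticality and the degree bounds above, generating candidates orderly up to isomorphism via symmetry breaking, and using the $J_4$-free local structure to limit branching, while discharging the $5$-colorability checks with an incremental SAT solver. I expect that pushing the lower bound beyond $22$ toward the conjectured value near $25$ is exactly where this search becomes intractable, which accounts for the gap in the theorem.
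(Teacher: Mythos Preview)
Your upper-bound plan is plausible but unnecessarily indirect; the paper takes a clean explicit route by observing that the complement $\overline{S}$ of the Schl\"afli graph on $27$ vertices is $J_4$-free with $\chi(\overline{S})=6$, and that deleting any two adjacent vertices from $\overline{S}$ already yields a $25$-vertex witness. No random process or ad-hoc search is needed, and your digression about Mycielski-type constructions becomes irrelevant.

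The real gap is in your lower bound. You correctly identify an exhaustive (even criticality-pruned) search over $J_4$-free graphs on $21$ vertices as the obstacle, and you yourself call the candidate space ``astronomically large''; the degree and Kostochka--Yancey constraints you list are far too weak to make that search tractable. The paper avoids this entirely with a Ramsey-type reduction you are missing: since $R(J_4,K_6)=21$, every $J_4$-free graph $G$ on $21$ vertices contains an independent set $I$ with $|I|=6$. By Lemma~\ref{lem:subgchi}, if $\chi(G)\ge 6$ then $\chi(G-I)\ge 5$, and $G-I$ is a $J_4$-free graph on $15$ vertices. But by Theorem~\ref{t:2222j4} there are exactly five such graphs, already enumerated. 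Hence every hypothetical $21$-vertex witness is a $6$-vertex independent-set extension of one of just five known base graphs, and running the Extension Algorithm~{\bf A} on these five inputs with $q=6$ is a small computation that turns up no extension with $\chi\ge 6$. Without this reduction your proposed search is not demonstrably feasible; with it, the lower bound follows almost immediately once Theorem~\ref{t:2222j4} is in hand.
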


\begin{theorem}
\label{t:23j4}
$F_v(2,3;J_4)=14$ and there are exactly $212$ graphs in
$\mathcal{F}_v(2,3;J_4;14)$, of which $24$ are maximal,
$26$ are minimal, and $1$ is bicritical.
\end{theorem}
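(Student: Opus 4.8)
The statement bundles one exact value together with a complete isomorph-free census, so the plan splits into three computational tasks joined by a few structural reductions. Throughout I would use the reformulation that $G \rightarrow (2,3)^v$ \emph{fails} exactly when $V(G)$ splits into an independent set (color $1$) and a triangle-free set (color $2$). Greedily moving any color-$2$ vertex with no color-$1$ neighbor into the independent part keeps both classes valid, so we may assume the independent class is maximal; hence $G \not\rightarrow (2,3)^v$ if and only if some maximal independent set $I$ has $G-I$ triangle-free. This turns arrowing into a finite, checkable property: enumerate the maximal independent sets and test each residual graph for a triangle.

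For the upper bound $F_v(2,3;J_4)\le 14$ I would isolate one explicit $14$-vertex witness $G$, ideally with enough symmetry (a circulant or small Cayley graph) to make verification transparent, and then confirm (i) that $G$ is $J_4$-free by checking that every edge lies in at most one triangle, and (ii) that $G$ arrows by running the maximal-independent-set test above. Since only $2^{14}$ colorings are involved, a direct search or a SAT call returning UNSAT yields an independent certificate; note also that the full census below subsumes this bound, but isolating a concrete witness is worthwhile.

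The lower bound $F_v(2,3;J_4)\ge 14$ and the census are the computational core. Any order-minimal witness is vertex-critical, so $G-v \not\rightarrow (2,3)^v$ for every $v$; taking a bad coloring of $G-v$ and arguing that it extends to neither color on $v$ forces $v$ to have a neighbor in the independent class and two adjacent neighbors in the triangle-free class, which gives a minimum-degree bound, while Nenov's Lemma~\ref{l:chromatic} forces $\chi(G)\ge 4$. I would feed these constraints ($J_4$-free, $\chi\ge 4$, bounded $\delta$, and bounds on $\alpha$) into an isomorph-free generator using canonical augmentation (e.g.\ \texttt{nauty/geng} with a $J_4$-free extension routine) to produce all candidates on $\le 14$ vertices, discard the non-arrowing ones, and for $n=14$ collect exactly the $212$ members of $\mathcal{F}_v(2,3;J_4;14)$. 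Classification is then a local per-graph computation: test maximality by adding each non-edge and checking whether a $J_4$ appears, test minimality by deleting each edge and re-running the arrowing test, and intersect the two to find the unique bicritical graph.

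The hard part will be making the search both tractable and trustworthy: the family of $J_4$-free graphs on $13$--$14$ vertices is far too large for naive enumeration, so the whole argument hinges on pruning strong enough (degree, chromatic, and independence constraints together with reliable canonical-form rejection) to keep generation feasible. I would therefore cross-check the final counts with two independent pipelines --- a direct combinatorial arrowing test and a SAT-based one producing UNSAT proofs --- to guard against implementation error in certifying this co-NP-flavored property.
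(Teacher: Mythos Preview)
Your reformulation of the $(2,3)^v$-arrowing test via maximal independent sets is exactly what the paper uses, and your overall decomposition---exhibit a witness for the upper bound, exhaust $n\le 13$ for the lower bound, census at $n=14$, then per-graph maximality/minimality checks---is sound. The substantive divergence is in how the enumeration is organised. You propose direct canonical-augmentation generation of all $J_4$-free graphs with $\chi\ge 4$ on up to $14$ vertices, relying on degree/chromatic/independence pruning to keep the search tractable. The paper reports that already at $n=13$ such a head-on approach carries ``extraordinary computational cost'' (there are over $34$ million $J_4$-free graphs on $12$ vertices), and instead works through \emph{maximal} $J_4$-free graphs: since arrowing is monotone under edge addition within the $J_4$-free class, it suffices to find the $24$ maximal $J_4$-free graphs on $14$ vertices that arrow, and then recover all $212$ members of $\mathcal{F}_v(2,3;J_4;14)$ by repeatedly deleting edges from those $24$ while arrowing persists. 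The maximal graphs themselves are produced by an extension algorithm that exploits $R(J_4,K_4)=11$: every $J_4$-free graph on $14$ vertices contains an independent set of size $4$, so each target arises by attaching a $4$-vertex independent set, via feasible ``cones'', to some $J_4$-free graph on $10$ vertices with $\chi\ge 3$, a family small enough to enumerate directly. The lower bound is handled the same way at $n=13$.

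So your plan is correct in principle, and your vertex-criticality argument for $\delta\ge 3$ is a nice observation, but the paper's maximal-first/extend-by-Ramsey-guaranteed-independent-set/then-edge-delete strategy is the missing idea that converts the outline into a computation one can actually finish.
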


\begin{theorem}
\label{t:33j4}
$F_v(3,3;J_4) \le 135$.
\end{theorem}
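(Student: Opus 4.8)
The statement is an upper bound, so the plan is to exhibit one concrete $J_4$-free graph $G$ on $135$ vertices with $G \rightarrow (3,3)^v$; then $F_v(3,3;J_4) \le |V(G)| = 135$ follows immediately from the definition. I would first reformulate the arrowing condition in hypergraph-colouring terms. A $2$-colouring of $V(G)$ fails to produce a monochromatic triangle exactly when both colour classes induce triangle-free subgraphs, i.e.\ when the $3$-uniform hypergraph $\mathcal{T}(G)$ whose hyperedges are the triangles of $G$ admits a proper $2$-colouring (has ``property B''). Hence $G \rightarrow (3,3)^v$ is equivalent to $\mathcal{T}(G)$ being \emph{not} $2$-colourable. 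The $J_4$-free hypothesis is very helpful here: since two triangles sharing an edge would form a $J_4$, any two triangles of $G$ meet in at most one vertex, so $\mathcal{T}(G)$ is a \emph{linear} $3$-uniform hypergraph, and because every edge lies in at most one triangle, $G$ has at most $\binom{135}{2}/3$ triangles. The whole problem thus becomes: build a $J_4$-free graph on $135$ vertices whose (linear, not-too-numerous) family of triangles has no property-B colouring. As a sanity check, Lemma~\ref{l:chromatic} forces $\chi(G) \ge m = 5$, which such a graph will comfortably exceed.

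To produce the graph I would use the $J_4$-free process: fix $n = 135$ vertices, pick a uniformly random ordering of the $\binom{n}{2}$ candidate edges, and scan them in this order, inserting an edge precisely when doing so keeps the graph $J_4$-free (equivalently, when its endpoints have at most one common neighbour $w$, and neither edge $uw$ nor $vw$ already lies in a triangle). The output is a maximal $J_4$-free graph that is locally sparse yet triangle-rich and tends to have large chromatic number, which is exactly the regime where arrowing becomes plausible; maximality is the right target since adding any further edge to a $J_4$-free graph can only help arrowing, so the densest admissible graph on a given edge order is the most promising. I would run the process over many random seeds, keep the candidates, and, if necessary, locally repair a near-miss graph by swapping or adding triangle-creating edges that preserve $J_4$-freeness, aiming to kill the few surviving good colourings.

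For each candidate $G$ I would certify $G \rightarrow (3,3)^v$ by deciding the complementary question ``does $\mathcal{T}(G)$ have property B?'' with a SAT solver. Introduce one Boolean variable $x_v$ per vertex and, for each triangle $\{u,v,w\}$, the two clauses $(\neg x_u \vee \neg x_v \vee \neg x_w)$ and $(x_u \vee x_v \vee x_w)$, ruling out an all-red and an all-blue monochromatic triangle respectively. If the solver returns UNSAT, then no $2$-colouring avoids a monochromatic triangle, so $G$ arrows; an independent recomputation confirming that $G$ is genuinely $J_4$-free (re-run after any manual edge edits) then completes the bound of $135$.

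The main obstacle is the verification at $n = 135$: the $2^{135}$ colourings make brute force hopeless, and even the SAT instance can be stubborn. I expect to lean on the structural facts above---the triangles form a linear hypergraph with only a few thousand hyperedges---together with symmetry breaking (fixing the colour of a high-degree vertex and of one incident triangle) and, if needed, a cube-and-conquer or component-based split, to push the UNSAT proof through. A secondary difficulty is that the process is random: there is no a priori guarantee that $135$ vertices suffice on any single run, so finding a seed (or a lightly edited output) that genuinely arrows, rather than merely attains high chromatic number, is likely to require many trials and is where most of the experimental effort will go.
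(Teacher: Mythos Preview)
Your proposal is correct and essentially mirrors the paper's own proof: the authors likewise generate candidate graphs via the $J_4$-free random process and certify $G \rightarrow (3,3)^v$ by encoding, for each triangle $\{u,v,w\}$, the clause pair $(\neg x_u \vee \neg x_v \vee \neg x_w) \wedge (x_u \vee x_v \vee x_w)$ and checking unsatisfiability with a SAT solver, the smallest successful witness having $135$ vertices. Your added remarks on the linearity of the triangle hypergraph, local repairs, and symmetry breaking are sensible implementation heuristics but go slightly beyond what the paper reports.
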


For the context and comparison with the cases involving
$K_3$ and $K_4$ instead of $J_4$, we collect the values
and bounds from Theorems~\ref{t:222j4}--\ref{t:22222j4} in Table~\ref{tab:k3k4}.
Observe that since $K_3 \subset J_4 \subset K_4$, we
must have
$F_v(2^r;K_3) \ge F_v(2^r;J_4) \ge F_v(2^r;K_4)$,
for each $r$.

\begin{table}[h]
  \centering
  { \renewcommand{\arraystretch}{1.05} \small
    \begin{tabular}{|c||cc|c|cc|}
	\hline
    $r$ & $K_3$ & ref. & $J_4$ & $K_4$ & ref. \\
	\hline
2& 5 & $C_5$& 3 & 3 & $K_3$\\
3& 11 & \cite{Chv74} & {\bf 9} & 6 & $W_6$\\
4& 22 & \cite{JG95} & {\bf 15} & 11 & \cite{Nen84}\\
5& 32--40 & \cite{Goe20} & {\bf 22 -- 25} & 16 & \cite{LR11}\cite{Nen07}\\
	  \hline
    \end{tabular}
	\caption{Known values and bounds for $F_v(2^r;H)$,
	for $r\le 5$ and $H \in \{K_3,J_4,K_4\}$.
	The bold entries in the $J_4$ column were obtained in this work.
	For easy entries we give the upper bound witness graph.
	The unique witness for $F_v(2^3;K_4)=6$ is the wheel graph
	$W_6=K_1+C_5=K_6-C_5$.}
	\label{tab:k3k4}
    }
\end{table}

The following sections contain our proof of Theorem~\ref{t:3rj4},
the description of computations leading to
Theorems~\ref{t:222j4}--\ref{t:33j4},
and graph constructions.
The closing section states some open problems and
it contains a few remarks for parameters beyond
those studied in this paper.
The witness graphs for Theorems~\ref{t:222j4}--\ref{t:33j4},
as well as the code implementing algorithm $\textbf{A}$ in
Section~\ref{s:algorithms} are available at
\url{https://www.cs.rochester.edu/~dnarvaez/folkmanj4/}.

We found a few discrepancies between our results
and those claimed in the paper~\cite{JLX19}. We investigated
all such differences, and we arrived to the conclusion
that the computations reported in~\cite{JLX19} were
incomplete. 
The computational results reported in this paper were obtained
by independent implementations which agreed on the final and
intermediate claims. The main implementation described throughout
this paper produced all reported results, except those involving
SAT-solvers in Section 3.4. Two others implementations were used
to corroborate various parts of these results and to manage the
interface in Section 3.4.

\section{The Existence of $F_v(3^r;J_4)$ and $F_e(3,3;H)$}

Two seminal papers by Ne\v{s}et\v{r}il and R\"{o}dl~\cite{NR76,NR81}
lay the foundation for our reasoning
in this section: the first one from 1976 implies that
the edge Folkman numbers $F_e(3^r;K_4)$ are well defined
for all $r \ge 1$, and the second paper from 1981 shows
that $F_v(3,3;J_4)$ is well defined. These, together with
a technique developed by Dudek and R\"{o}dl in 2008~\cite{DR08},
permit us to give a rather elementary proof that
$F_v(3^r;J_4)$ is well defined for all $r \ge 1$.

For graph $G = (V_G,E_G)$, we define the graph $F=DR(G)$
as in the construction by Dudek-R\"{o}dl \cite{DR08}, as follows:
$$DR(G) = F = (E_G,E_F),$$
where the set of vertices of $F$ consists of
the edges of $G$, and the edge set $E_F$ contains
the edges $\{ef, fg, eg\}$
for every edge-triangle $efg$ in $E_G$, and no other edges.
Note that each pair of edges from triangle
$\{e, f, g\} \subseteq E_G$
spans the same three vertices in $G$,
and thus the same three vertices in
the corresponding vertex-triangle in $F$.
Such triangles in $F$ will be called {\em images}
of triangles from $G$,
other triangles in $F$ will be called {\em spurious}.
Note that for any edge $ef$ in $E_F$, the edges $e$ and $f$ in $E_G$
must share one vertex.
It is easy to observe (see the proof of Lemma~\ref{l:DRG}(2) below)
that two image triangles may share vertices but
no edges.

\smallskip
\noindent
\emph{Example.}
For $G=K_4$, $DR(G)$ has 6 vertices, 12 edges
(it is 4-regular), and 8 triangles. These 8 triangles
are split into 4 images of triangles from $G$
and 4 spurious triangles.

\begin{lemma}
\label{l:DRG}
Let $G$ be any $K_4$-free graph, and let $F$ denote
the graph $DR(G)$. Then we have that:

\begin{enumerate}
    \item $F$ has no spurious triangles,
    \item $F$ is $J_4$-free, and
    \item $G \rightarrow (3^r)^e$ if and only if $F \rightarrow (3^r)^v$,
for every $r \ge 1$.
\end{enumerate}
\end{lemma}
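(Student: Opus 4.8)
The plan is to prove the three parts in order, since (2) and (3) both rest on (1). Throughout I identify a vertex of $F$ with the corresponding edge of $G$, and I record the basic fact that an edge $\{e,f\}$ of $F$ exists exactly when $e$ and $f$ are two sides of some triangle of $G$; in particular $e$ and $f$ then share one endpoint, and the triangle of $G$ containing both of them is uniquely determined (its third edge joins the two non-shared endpoints).

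For (1), I would take an arbitrary triangle $\{e,f,g\}$ of $F$ and show it must be an image. Since $ef,fg,eg\in E_F$, each of the three pairs consists of adjacent edges of $G$. Writing $e=\{a,b\}$ and using that $e$ and $f$ meet, I put $f=\{b,c\}$, and then analyze where the third edge $g$ attaches to $e$. The decisive dichotomy is whether $g$ passes through the shared vertex $b$ or through the other endpoint $a$. If $g=\{b,d\}$ passes through $b$, then the three completing edges forced by $ef,fg,eg\in E_F$ are $\{a,c\}$, $\{a,d\}$, and $\{c,d\}$; together with $e,f,g$ these are all six edges on $\{a,b,c,d\}$, giving a $K_4$ and contradicting the hypothesis. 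The remaining case forces $g=\{a,c\}$, so $e,f,g$ are precisely the three sides of the triangle $abc$ and $\{e,f,g\}$ is an image. The single place the $K_4$-freeness is used is this exclusion, which I expect to be the crux of the entire lemma.

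For (2), I would argue that two distinct image triangles cannot share an edge of $F$. An edge of $F$ is a pair $\{e,f\}$ of adjacent edges of $G$, and by the uniqueness noted above there is at most one triangle of $G$ through both $e$ and $f$, hence at most one image triangle through $\{e,f\}$. By part (1) every triangle of $F$ is an image, so no two triangles of $F$ share an edge; since $J_4$ consists of two triangles sharing an edge, $F$ is $J_4$-free.

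For (3), I would use that $V(F)=E_G$, so $r$-colorings of the vertices of $F$ and $r$-colorings of the edges of $G$ are literally the same objects. Under this identification a monochromatic $K_3$ in an edge-coloring of $G$ is exactly a monochromatic image triangle in the corresponding vertex-coloring of $F$. The direction $G\rightarrow(3^r)^e \Rightarrow F\rightarrow(3^r)^v$ is then immediate: a monochromatic triangle in $G$ supplies a monochromatic image triangle in $F$. The converse is where part (1) is needed: a monochromatic triangle in $F$ has no choice but to be an image, and so yields a monochromatic triangle in $G$. I expect (3) to be routine once (1) is established, the only subtlety being that the ``if'' direction genuinely requires the absence of spurious triangles.
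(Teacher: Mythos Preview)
Your proof is correct and follows essentially the same route as the paper's: part~(1) is handled by the identical case split on whether the third edge $g$ meets the shared endpoint of $e$ and $f$ (yielding a $K_4$) or the opposite endpoint (yielding the image), part~(2) uses the uniqueness of the image triangle through a given $F$-edge together with~(1), and part~(3) uses the literal identification of edge-colorings of $G$ with vertex-colorings of $F$, invoking~(1) for the converse direction. The only cosmetic difference is that the paper frames~(1) as a proof by contradiction starting from a hypothetical spurious triangle, whereas you do a direct exhaustive case analysis; the content is the same.
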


\begin{proof}
First, we will show that the graph $F=DR(G)$
has no spurious triangles.
For contradiction, suppose that $efg$ is a spurious triangle in $F$,
where $e=\{A,B\}$ and $f=\{A,C\}$ for some vertices $A,B,C \in V_G$.
Since edge $g$ is incident to both $e$ and $f$, but $efg$ is not
an image of a triangle in $G$, we must have $g=\{A,D\}$ for
another vertex $D \in V_G$. This implies that $ABC, ABD, ACD$ are
triangles, and thus also $BCD$, in $G$. Hence $ABCD$ forms a $K_4$
in $G$, contradicting the assumption. This shows part (1).

If $F$ contains $J_4$ with the vertices $\{e,f,g,h\}$ and
formed by two triangles $\{efg\}$ and $\{efh\}$, then we
claim that at most one of them is an image triangle.
In order to see this, let $e=\{A,B\}$, $f=\{A,C\}$ and note
that the unique image triangle in $F$ containing $e$ and $f$
is the one implied by the triangle $ABC$ in $G$.
Hence, at least one of $\{efg\}$ and $\{efh\}$ must
be spurious, but by (1) this is impossible in $F=DR(G)$
obtained from a $K_4$-free graph $G$. Thus (2) follows.

For (3),
consider the natural bijection between all
$r$-vertex-colorings of $F$ and $r$-edge-colorings
of $G$. This bijection
preserves the number of colors used in any edge-triangle
in $G$ when mapped to its image triangle
in $F$. Hence, because of (1) and (2), we can conclude (3).
\end{proof}

Using Lemma \ref{l:DRG}, we can give a simple proof
that for every $r$ there exists a $J_4$-free graph
such that in any $r$-coloring of its vertices
there must be a monochromatic triangle, or
equivalently, $\mathcal{F}_v(3^r;J_4) \not=\emptyset$.

\noindent
{\em Proof of Theorem~\ref{t:3rj4}}.
A general result by Ne\v{s}et\v{r}il and R\"{o}dl~\cite{NR76}
implies that the sets $\mathcal{F}_e(3^r;K_4)$ are
nonempty for all $r \ge 1$. This, together with the claim
that $\mathcal{F}_v(3^2;J_4) \not=\emptyset$, was also
discussed in~\cite{XLR18}. For general $r$, consider
any graph $G \in \mathcal{F}_e(3^r;K_4)$.
Then by Lemma~\ref{l:DRG}(3), we have that
$DR(G) \in \mathcal{F}_v(3^r;J_4)$,
and thus the numbers of the form
$F_v(3^r;J_4)$ are well defined for all $r \ge 1$.
\hfill$\square$

Note that, by monotonicity, Theorem~\ref{t:3rj4} implies
the existence of Folkman numbers of the form
$F_v(a_1,\dots,a_r;J_4)$ with
$1 \le a_i \le 3$ for all $1 \le i \le r$.
The orders of graphs in $\mathcal{F}_e(3^r;K_4)$
and $\mathcal{F}_v(3^r;J_4)$ can be expected to be quite
large, even for small $r$. In the trivial case for $r=1$
we have $F_e(3;K_4)=F_v(3;J_4)=3$, but both problems become
very difficult already for $r=2$. For the edge problem,
the best known bounds are $21 \le F_e(3,3;K_4) \le 786$~\cite{BN20,LRX14},
while for the vertex problem we establish
the bound $F_v(3,3;J_4) \le 135$ in Theorem~\ref{t:33j4}.
We are not aware of any reasonable bounds
for $r \ge 3$ in either case.

We wish to point to a study of the existence
of edge Folkman numbers for some small parameters \cite{XLR18}. While a simple argument
easily shows that $F_e(3,3;J_4)$ does not exist,
for other cases with $|V(H)|\le 5$ one can prove
or disprove the existence of $F_e(3,3;H)$
with some work, leaving only two open cases.
Namely, the following is known:
the sets $\mathcal{F}_e(3,3;H)$ are nonempty for
all connected graphs $H$ containing $K_4$, and for
some graphs not containing $K_4$.
If $H$ is any connected $K_4$-free graph on
$5$-vertices containing $K_3$,
then $\mathcal{F}_e(3,3;H)=\emptyset$
except for two possible cases:
the wheel graph $W_5=K_1+C_4$ and its subgraph
$\overline{P_2 \cup P_3} \subset W_5$.
The latter two cases remain open.

\section{Computational Proofs}

\subsection{Overview and Algorithms}
\label{s:algorithms}

In this section we describe the computations which were performed
to obtain the proofs of Theorems~\ref{t:222j4}--\ref{t:33j4} stated in Section~\ref{s:background}.
First, we give an overview of the algorithms that were
used or developed for this work,
including some details of their implementation.
In the following subsections
we summarize the results of our computations.
We present graphs establishing the upper bounds in Theorems 3--6,
and give counts for several intermediate
graph families which were obtained.
All graphs involved in the computations were $J_4$-free.
The target sets of graphs had additional constraints consisting
of the number of vertices, independence number, chromatic number,
and the desired parameters of arrowing, $\{a_1,\dots,a_r\}$,
where $2 \le a_i \le 3$ for $1 \le i \le r$.

The basis of our software framework consisted of the package
{\tt nauty} developed by McKay \cite{nauty}, which includes
a powerful graph generator {\tt geng}, tools to remove graph
isomorphs, and several other utilities for graph manipulation.
In the following, we will list some of the graphs in their
g6-format, a compact string representation of graphs in {\tt nauty}.
These graphs are also available at \url{https://www.cs.rochester.edu/~dnarvaez/folkmanj4/}.

The template of our main {\bf Extension
Algorithm  A} is presented 
and commented on below. We also implemented
filters for extracting graphs with specified chromatic number,
graph which are maximal $J_4$-free, those which arrow
$(2,3)^v$ and $(3,3)^v$, and other utilities.
Observe that by Lemma~\ref{l:chromatic} the test for arrowing $(2^r)^v$ is
the same as for chromatic number.

The graph families pointed to in Table~\ref{t:smallcases} were obtained
by using {\tt geng} with filters for $J_4$-free graphs and
for graphs with given chromatic number. For graph families on
13 or more vertices, we used mainly algorithm {\bf A}
together with other utilities,
as described in the notes to Table~\ref{t:nonisomax}.

Our custom filter for graphs with specified chromatic number range
was tuned to process large number of graphs with small $\chi(G)$.
For a given graph $G$, first we find all maximal independent sets,
and then determine $\chi(G)$ as the minimum number of these independent
sets which cover $V_G$.
The custom filter for maximal $J_4$-free graphs has two modes:
a full test detecting graphs for which addition of any edge forms a $J_4$,
and a partial test for graphs being constructed within algorithm {\bf A}
which cannot be maximal $J_4$-free after {\bf A} terminates.
The latter permitted to significantly prune the output of {\bf A},
which was then filtered through the full test.

Testing whether $G \rightarrow (2,3)^v$ was applied only to
graphs with $\chi(G) \ge 4$, since by Lemma 1 this arrowing
does not hold if $\chi(G) \le 3$. This test was done by
checking that for every maximal independent set $I \subset V_G$
the set of vertices $(V_G \setminus I)$ does not
induce any triangle.
The test for $G \rightarrow (2,2,3)^v$ was accomplished
similarly by checking that the set of vertices
$(V_G \setminus (I_1 \cup I_2))$ does not induce any triangle,
for every pair of maximal independent sets $I_1$ and $I_2$.
The test for $G \rightarrow (3,3)^v$ was accomplished
with a totally distinct approach involving SAT-solvers
as described in Section 3.4. This was necessary since
for arrowing $(3,3)^v$ we were processing a large number of graphs
on 100 to 200 vertices, for which an effective handling
of their chromatic number and enumeration of maximal
triangle-free sets would
be computationally very expensive.

Next, we give a high-level pseudocode of the {\bf Extension Algorithm A}
followed by comments explaining some of its components
in more detail.

\renewcommand{\algorithmicrequire}{\textbf{Input:}}
\renewcommand{\algorithmicensure}{\textbf{Output:}}

\begin{algorithm}[H]
\caption*{{\bf Extension Algorithm A}}
  \begin{algorithmic}
\REQUIRE{
$\mathcal{G}$ - a set of $J_4$-free graphs, each on $n$-vertices,
$q$ - extension degree,\\
$\chi$ - target chromatic number,
$\delta$ - minimum cone size.}
\ENSURE{$\mathcal{H}$ - a set graphs which are extensions of 
graphs from $\mathcal{G}$.
$H \in \mathcal{H}$ if and only if $H$ is a
$q$-vertex extension of any graph
$G \in \mathcal{G}$, $q$ new vertices in $H$ form an independent set,
$|V_H|=n+q$, new vertices have degree $\ge\delta$,
and such that $H$ is maximal $J_4$-free
and $\chi(H)\ge\chi$.}

\STATE $\mathcal{H}=\emptyset$;
\FOR{every graph $G \in \mathcal{G}$}
\STATE (for definitions of feasible cones and $\tau$ see {\bf Comments}
part in the main text below)
\STATE Compute and store $\mathcal{C}_G$, the set of feasible cones in $G$
of size at least $\delta$;
\STATE Compute and store the values of $\tau(C,D)$, for all $C,D \in \mathcal{C}_G$;

If $q=1$ then form $\mathcal{H}$ from $G$ and $\mathcal{C}_G$;

If $q=2$ then form $\mathcal{H}$ from $G$ and pairs of cones in $\mathcal{C}_G$ using $\tau$;

If $q \ge 3$ then
\FOR{$k=3$ to $q$}
\STATE Using known $(k-1)$-tuples, make all $k$-tuples
($k$-multisets) of feasible cones
$\{C_1,\dots,C_k\}$ such that $\tau(C_i,C_j)$
is true for all $1 \le i, j \le q$.
If $k=q$, then for each such $q$-tuple
make graph $H$ from $G$ and $\{C_1,\dots,C_q\}$.\\
If $H$ is maximal $J_4$-free, then add $H$ to $\mathcal{H}$.
\ENDFOR
\ENDFOR

\STATE If $q<3$ then remove from $\mathcal{H}$ graphs $H$ which are not maximal $J_4$-free;
\STATE Remove isomorphs from $\mathcal{H}$;
\STATE Remove from $\mathcal{H}$ graphs $H$ with $\chi(H)<\chi$.
  \end{algorithmic}
\end{algorithm}

\noindent
{\bf Comments on the Extension Algorithm A}.
The inputs are: a family
of graphs $\mathcal{G}$ consisting of $n$-vertex $J_4$-free graphs,
an integer $q$, which is the extension degree, the target chromatic
number $\chi$, and the minimum degree $\delta$ of new vertices.
For each $G \in \mathcal{G}$, algorithm $\bf A$ outputs 
all maximal $J_4$-free graphs $H$ with $\chi(H) \ge \chi$
such that they can be obtained from $G$ by adding an independent set
$I=\{v_1,\dots,v_q\}$ and some edges between $I$ and $V_G$.
New vertices have degree at least $\delta$.
These output graphs $H$ will be called $q$-vertex extensions of
the input graph $G$.

The new edges of $H$
are defined by $q$ {\em cones} $\{C_1,\dots,C_q\}$, $C_i \subseteq V_G$,
where the set of edges connecting $v_i$ to $V_G$ is
$\{\{v_i,u\}\ |\ u \in C_i\}$. First, we precompute the set of
all {\em feasible cones} $\mathcal{C}_G$ such that for each $C\in\mathcal{C}_G$
the 1-vertex extension of $G$ using $C$ is $J_4$-free and $|C|\ge \delta$.
We also precompute a binary predicate $\tau(C,D)$ on pairs of feasible
cones which is false if
$C\cap D=\emptyset$, $C\cap D=\{x\}$ and there is
no vertex $y \in (C\setminus D)\cup(D\setminus C)$ connected to $x$,
or $C\cap D$ induces an edge in $G$.
Otherwise, $\tau(C,D)$ is set to true.
One can easily see that if $\tau(C,D)$ is false and both cones $C$ and $D$
are used in the extension, then $H$ is not maximal $J_4$-free.
This test significantly prunes the search space.
Next, we assemble $q$-tuples of feasible cones such that each
pair of cones used passes the $\tau$-test. Each such $q$-tuple
defines one graph $H$. Finally, the isomorphic copies of graphs
are removed, and the remaining graphs $H$ are tested for $\chi(H)$.

The cases for small $q \le 2$ do not require the inner for-loop, since
they essentially reduce to the computation of feasible cones ($q=1$) and $\tau$ ($q=2$), respectively.
\hfill$\square$

\medskip
Note that larger values of input $\delta$ in {\bf A} produce fewer cones and thus allow for faster computation. Maximal $J_4$-free graphs $H$ with $\delta = 1$
are easy to characterize theoretically (in particular they have $\chi = 3$), and once they were confirmed by running {\bf A}, in all further computations we set $\delta \ge 2$. For most of our computations we use $\delta = 2$, but we also observe that when constructing graph families which are known to be $\chi$-vertex-critical, it is sufficient to set $\delta = \chi - 1$.

The values of the Ramsey numbers of the form $R(J_4,K_q)$
are known for all $q \le 6$ (cf. \cite{SRN}).
In particular, the values of importance to our computations are
$R(J_4,K_4)=11$, $R(J_4,K_5)=16$ and $R(J_4,K_6)=21$.
We use these values to determine the
value of parameter $q$ for algorithm ${\bf A}$
by applying an observation
that any $J_4$-free graph $H$ must have
$\alpha(H) \ge q$, provided $|V_H|=|V_G|+q \ge R(J_4,K_q)$.

Sections 3.2 and 3.3 list several sets of parameters for {\bf A} which
were used: $\mathcal{G}$ is taken from the cases reported in Table~\ref{t:smallcases},
or equal to $\mathcal{F}_v(2,3;J_4;14)$ or $\mathcal{F}_v(2^4;J_4;15)$,
and the ranges of other parameters are
$1 \le q \le 7$, $4 \le \chi \le 6$, and $2 \le \delta \le \chi-1$.

\subsection{Enumerations for Small Cases}

\begin{table}[h]
  \centering
  { \renewcommand{\arraystretch}{1.05} \small
    \begin{tabular}{|r||r|r|r|r|r|}
	\hline
    $n$ & all graphs & $J_4$-free & $\chi=2$ &
	$J_4$-free, $\chi=3$ & $J_4$-free, $\chi=4$\\
	\hline
      6 & 156 & 69 & 34 & 34 &0\\
      7 & 1044 & 255 & 87 & 167 &0\\
      8 & 12346 & 1301 & 302 & 998 &0\\
      9 & 274668 & 9297 & 1118 & 8175 &3\\
      10 & 12005168 & 97919 & 5478 & 92379 & 61\\
      11 & 1018997864 & 1519456 & 32302 & 1484866 & 2287\\
      12 & 165091172592 & 34270158 & 251134 & 33888537 & 130486\\
	  \hline
    \end{tabular}
	\caption{The number of nonisomorphic graphs $G$ by their type and the
number of vertices $n$, $6 \le n \le 12$. The corresponding sets of graphs were
obtained by using graph generator {\tt geng} of {\tt nauty} with
tests for $J_4$-free graphs and chromatic number $\chi(G)$.}
\label{t:smallcases}
    }
\end{table}

\begin{figure}[h!t]
\centering
\vspace{-1ex}
\includegraphics[scale=0.6]{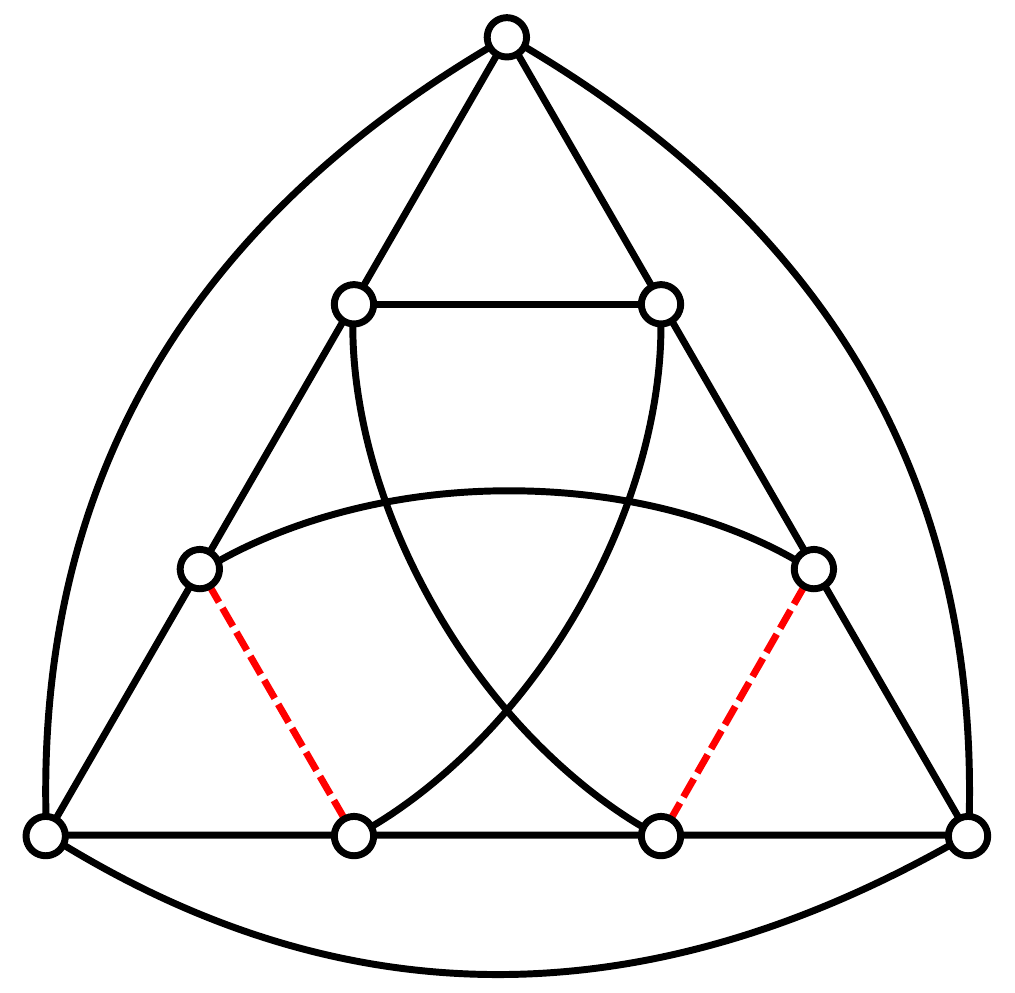}
\caption{$|\mathcal{F}_v(2,2,2;J_4;9)|=3.$
The 18-edge graph, which is {\tt H\{`Ypgj} in g6-format,
is formed by all depicted edges,
the other two graphs with 17 and 16 edges
are obtained by deleting the edges marked in red, in any order.}
\vspace{2ex}
\label{fig:X1}
\end{figure}

The results of our computations for small cases are summarized in Tables 2 and 3. The special graphs, which are witnesses for the exact values in Theorems 3, 4 and 6, are presented in Figures 1--4.

\subsubsection*{Notes to Table~\ref{t:smallcases}}
\begin{itemize}
\item
In each row $n$, the sum of entries for $\chi=2,3,4$ is one
less than the count of $J_4$-free graphs (because
the only missed graph has $\chi=1$). Note that
$\chi(G)=2$ implies that $G$ is $J_4$-free.
\item
The entries 0 and 3 in rows 8 and 9, respectively,
of the last column show that
$F_v(2,2,2;J_4)=9$ and $|\mathcal{F}_v(2,2,2;J_4;9)|=3$;
the three witnesses have 16, 17 and 18 edges, respectively,
and they are presented in Figure \ref{fig:X1}.
This part proves Theorem 3.
\item
Obtaining the next row of Table~\ref{t:smallcases} (for $n=13$)
by the same approach is doable
but only at an extraordinary computational cost. Thus, for $n \ge 13$
we first targeted only maximal $J_4$-free graphs. The results
are reported in Table~\ref{t:nonisomax}.
\end{itemize}

\begin{table}[h]
  \centering
  { \renewcommand{\arraystretch}{1.05} \small
    \begin{tabular}{|c||r|r|r|}
	\hline
    type of graphs & $n=13$ & $n=14$ & $n=15$ \\
	\hline
maximal $J_4$-free, $\chi=2$& 5 & 6 & 6 \\
maximal $J_4$-free, $\chi=3$& 15684 & & \\
maximal $J_4$-free, $\chi=4$& 4750 & 74738 & \\
maximal $J_4$-free, $\chi=5$& 0 & 0 & 1\\
	  \hline
    \end{tabular}
	\caption{Counts of nonisomorphic maximal $J_4$-free graphs $G$
	by their chromatic number $\chi=\chi(G)$
	and number of vertices $n$, for $13 \le n \le 15$. The results
	for $n \ge 14$ and $\chi \ge 4$ required
	significant computational resources.}
	\label{t:nonisomax}
    }
\end{table}

The graph families for $\chi \ge 3$ summarized in Table 3  were constructed using algorithm {\bf A}. The following lemma was used to determine the initial family of graphs $\mathcal{G}$. More details of 
how each entry with $\chi \geq 3$ was computed are listed in the notes to Table 3.

\begin{lemma}
    \label{lem:subgchi}
    Let $G$ be any graph with $\chi(G) \geq k$, and let $I \subseteq V(G)$ be any independent set in $G$. Then for
    $G'=G[V(G) \setminus I]$ we have $\chi(G') \geq k-1$.
    \end{lemma}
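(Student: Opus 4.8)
The plan is to prove the slightly stronger inequality $\chi(G) \le \chi(G') + 1$, where $G' = G[V(G)\setminus I]$; once this is in hand the lemma is immediate, since $\chi(G) \ge k$ then forces $\chi(G') \ge \chi(G) - 1 \ge k - 1$.

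First I would fix a proper coloring of $G'$ that uses exactly $\chi(G')$ colors, drawn from a palette $\{1,\dots,\chi(G')\}$. Because $I$ is an independent set, its vertices are pairwise nonadjacent, so I can assign every vertex of $I$ a single new color, say $\chi(G')+1$. The key step is then to check that the resulting coloring of all of $V(G)$ is proper. Every edge of $G$ falls into one of three types: an edge inside $G'$, which is properly colored by the chosen coloring of $G'$; an edge inside $I$, which cannot occur since $I$ is independent; or an edge joining a vertex of $I$ to a vertex of $V(G)\setminus I$, whose endpoints receive colors from the disjoint sets $\{1,\dots,\chi(G')\}$ and $\{\chi(G')+1\}$ and hence differ. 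This exhibits a proper coloring of $G$ with $\chi(G')+1$ colors, giving $\chi(G)\le \chi(G')+1$.

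Since the construction is completely explicit, I do not anticipate a genuine obstacle; the only point requiring care is verifying that the freshly introduced color never collides across the cut between $I$ and $V(G)\setminus I$, and this is guaranteed precisely because that color appears nowhere in the coloring of $G'$. One could equally phrase the argument by contradiction, assuming $\chi(G')\le k-2$ and extending an optimal $(k-2)$-coloring of $G'$ to a $(k-1)$-coloring of $G$ to contradict $\chi(G)\ge k$, but the direct inequality is cleaner and yields the statement at once.
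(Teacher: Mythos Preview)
Your proof is correct and is essentially the same argument as the paper's: take a proper coloring of $G'$ and extend it to all of $G$ by giving every vertex of $I$ one fresh color, using that $I$ is independent. The only cosmetic difference is packaging---the paper runs exactly the contradiction version you mention at the end (assume $\chi(G')\le k-2$, extend to a $(k-1)$-coloring of $G$, contradict $\chi(G)\ge k$), while you first isolate the inequality $\chi(G)\le\chi(G')+1$ and then apply it.
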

    \begin{proof}
Assume there exists an $I \subseteq V(G)$ such that the graph $G'$ induced in $G$ on $V(G) \setminus I$ has $\chi(G') \leq k-2$. If $V(G')$ is colored with 
$k-2$ colors, then all vertices in $I$ can be colored with the same $(k-1)$-st color, not used in the coloring of $V(G')$. This implies that $\chi(G) \leq k-1$, which is a contradiction.
\end{proof}

\subsubsection{Notes to Table~\ref{t:nonisomax}}

\begin{itemize}
\item
The row for $\chi=2$ shows the number of complete
bipartite graphs $K_{s,t}$ with $s+t=n$ and $s,t\ge 2$.

\item
\emph{Graphs with $n = 13, \chi \ge 3$.} Using the fact that $R(J_4,K_4) = 11$, we can see that any $J_4$-free
 graph $G$ of order at least 11 must have $\alpha(G) \geq 4$.
The graphs with $n = 13$ were obtained by {\bf A} in three different ways: via 3-, 2- and 1-vertex extensions of graphs with
 10, 11 and 12 vertices, respectively.
When $3 \leq \chi \leq 4$, we use $\delta = 2$.
When $\chi = 5$, we set $\delta = \chi - 1 = 4$, since the target graphs are known to be $\chi$-vertex-critical. This is because Lemma 9, $R(J_4,K_4) = 11$ and Table 2 imply that there is no $J_4$-free graph with $n = 12$ and $\chi = 5$.
\item
\emph{Graphs with $n = 14, \chi \ge 4$.} These graphs were obtained by computing 4-vertex extensions of
        the graphs with $n=10$ and $\chi\ge 3$ using $\delta = 2$. We set $\delta = 4$ when generating graphs with $\chi = 5$ since no graphs with $\chi = 5$ were found on 13 vertices.
\item
\emph{Graphs with $n = 15, \chi \ge 5$.} The unique maximal graph with $n=15$ and $\chi=5$
        was obtained by performing 3-vertex extensions of
        all graphs with $n=12$ and $\chi \geq 4$, and independently by 4-vertex extensions of all graphs with $n=11$ and $\chi \ge 4$. Since no graphs with $\chi = 5$ were found on 14 vertices, we
 set $\delta = 4$.

\item
Empty entries correspond to graphs whose full enumeration was
not attempted. These would be difficult to obtain, and they are not
relevant for this work. Still, many such graphs
were obtained as side result of other computations.
\item
The entry requiring most CPU time (about one CPU-week
if run on a single processor) was that for $\chi=4, n=14$.
It was obtained by applying algorithm {\bf A}
to make $J_4$-maximal 4-vertex extensions of
the 97918 graphs with $n=10$ and $\chi\ge 2$
(though using $\chi \ge 3$ would suffice).
Among the resulting 74738 graphs $G$, there are 24
of them for which $G \rightarrow (2,3)^v$. No graph
reported in column 13 satisfies this arrowing
(though, by Lemma 1, it would suffice to test
only 4750 graphs with $\chi=4$), and
thus $F_v(2,3;J_4)=14$.

\item
The complete set 
$\mathcal{F}_v(2,3;J_4;14)$ was obtained from the above 24
maximal $J_4$-free graphs by repeatedly deleting the edges
until they did not satisfy the arrowing.
This set consists of 212 nonisomorphic graphs,
with the number of edges ranging from 31 to 39,
the number of triangles from 8 to 10,
and the orders of their automorphism groups ranging from 1 to 8.
26 of these graphs are minimal. There exists a unique
(up to isomorphisms)
bicritical graph, namely the graph
$G_{14}$ shown in Figure \ref{fig:X2},
for which addition of any edge creates a $J_4$,
and deletion of any edge $e$ yields
$G_{14}-e \not\rightarrow (2,3)^v$.
This part proves Theorem~\ref{t:23j4}.

\begin{figure}[h!t]
\centering
\includegraphics[scale=0.75]{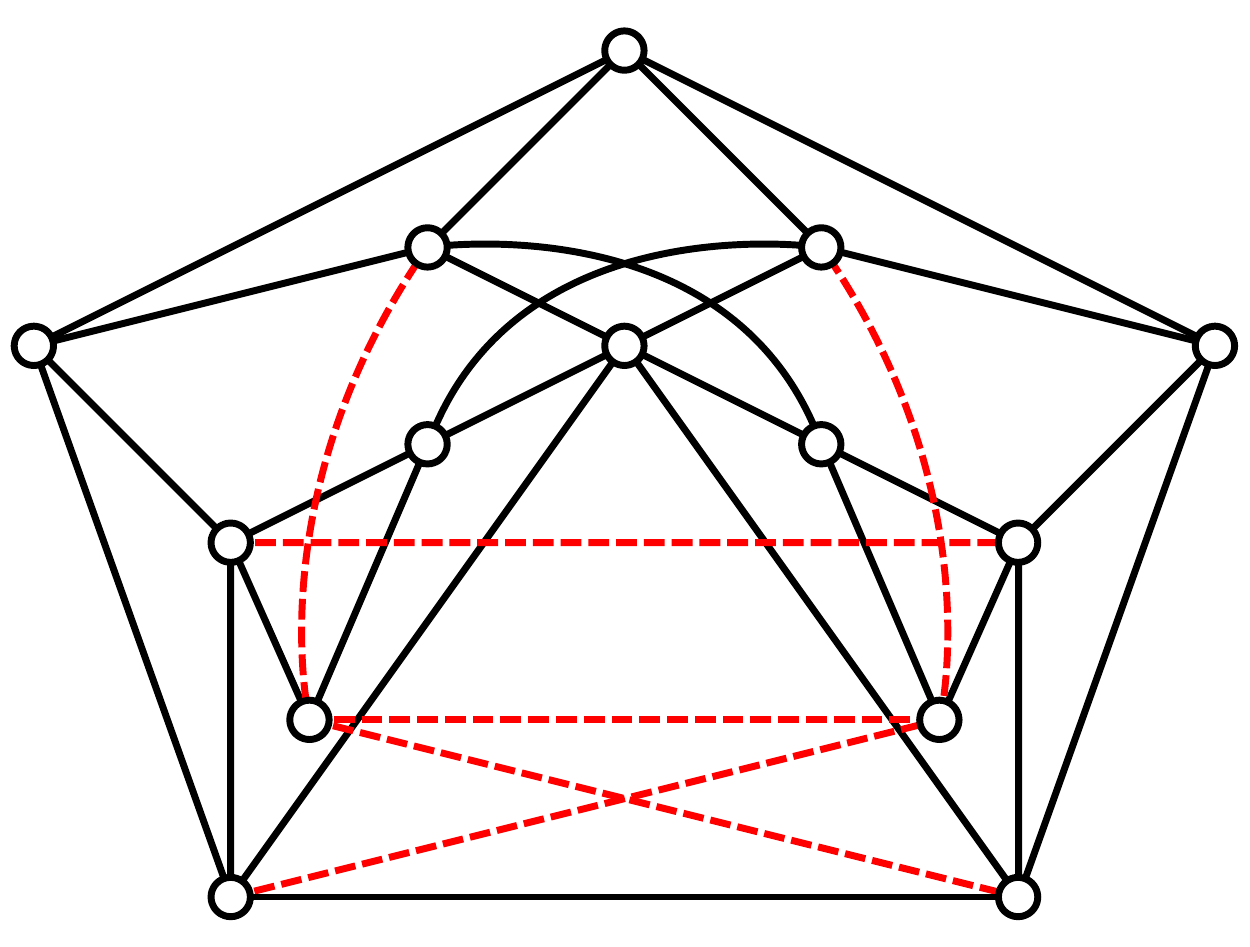}
\caption{Unique bicritical graph
$G_{14} \in \mathcal{F}_v(2,3;J_4;14)$.
Edges marked in red do not belong to any triangle in $G_{14}$.
The graph $G_{14}$, which is {\tt M?K\textunderscore iqg\textasciigrave QDqQXBpw?} in g6-format,
has 33 edges, 9 triangles and just one
non-trivial symmetry (left-right swap of the figure).}
\vspace{1ex}
\label{fig:X2}
\end{figure}

\begin{figure}
\centering
\includegraphics[scale=0.7]{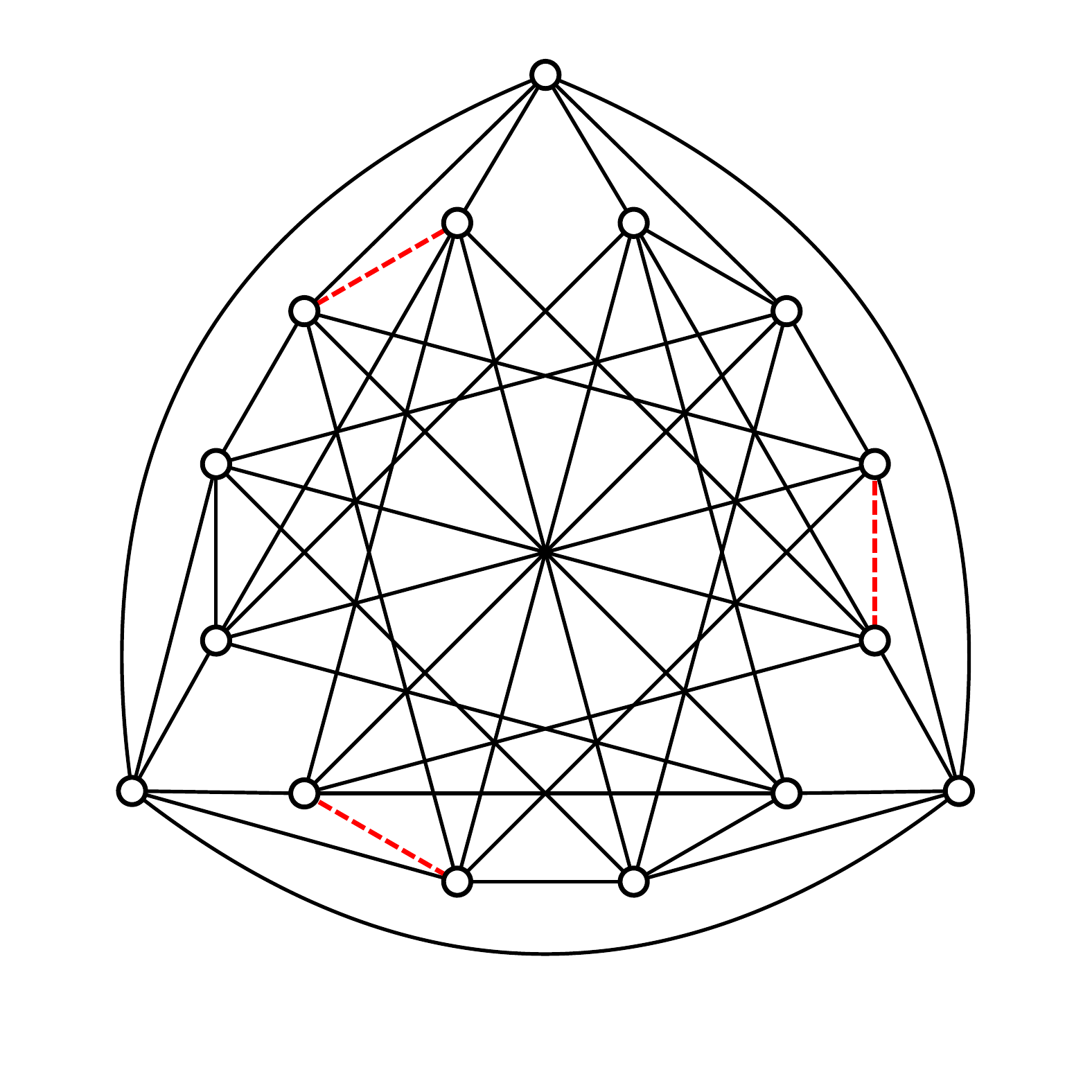}
\caption{A view of graphs in $\mathcal{F}_v(2^4;J_4;15)$.
The maximal graph on 45 edges,
which is {\tt N\{eCIhJSaWEfIuKqDeG} in g6-gormat,
is formed by all depicted edges. It is 6-regular.
Three vertices of the outer triangle form one orbit,
12 other vertices form the second orbit
(the center of the figure is not a vertex).
Three edges marked in red can be removed, in any order,
to give its subgraphs in $\mathcal{F}_v(2^4;J_4;15)$.
The 5-th graph is a subgraph of one with 44 edges.
The minimal graph on 42 edges (formed by the black edges)
has 72 automorphisms, more than the other four graphs.}
\label{fig:X3a}
\end{figure}

\begin{figure}[h!t]
\centering
\includegraphics[scale=0.7]{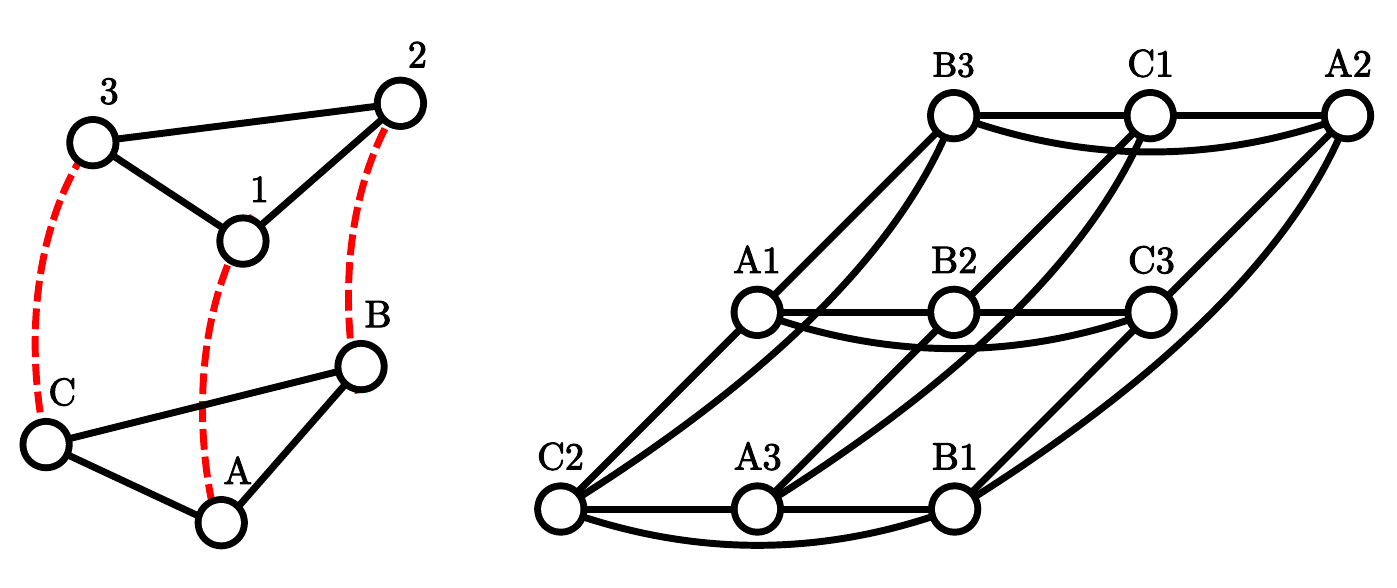}
\caption{Set view of $\mathcal{F}_v(2^4;J_4;15)$.
The 9-vertex $3 \times 3$ grid on the right has
6 triangles and 6 independent sets of 3 vertices. It is
a self-complementary graph.
The vertices \{A,B,C\} and \{1,2,3\} on the left are connected
to the grid by 18 edges as indicated by the labels.
Equivalently, 9 vertices of the grid on the right
connect to pairs of
vertices (one from each of two triangles) on the left.
The red edges can be dropped (in any order) yielding
graphs with 44, 43 and 42 edges, respectively. This figure
describes 4 graphs isomorphic to those in Figure 3, but
presenting them in a very different way.
The vertices of the middle row of the grid form
the triangle corresponding to the outer triangle in Figure 3.}
\vspace{2ex}
\label{fig:X3b}
\end{figure}

\item
The second most-expensive-to-obtain entry was that
for $n=15, \chi=5$. The entries in the last row of Table~\ref{t:nonisomax}
prove that $F_v(2^4;J_4)=15$.
The unique maximal graph in $\mathcal{F}_v(2^4;J_4;15)$
was obtained in two ways: as a 4-extension and 3-extension
of all graphs with $\chi=4$ on 11 and 12 vertices,
respectively.
\item
The complete set 
$\mathcal{F}_v(2^4;J_4;15)$ consists of 5 graphs
with 45, 44, 43, 43 and 42 edges, respectively.
Four of these graphs (except one on 43 edges) form
a chain presented in Figure \ref{fig:X3a}. The graph on
45 edges is formed by all edges, three red edges
can be removed (in any order) to give its subgraphs
which are also in $\mathcal{F}_v(2^4;J_4;15)$.
The fifth graph is a subgraph of one on 44 edges.
This part proves Theorem~\ref{t:2222j4}.
\item
Another view of the graphs in $\mathcal{F}_v(2^4;J_4;15)$
is presented in Figure~\ref{fig:X3b}.
The 9-vertex grid on the right has 6 independent sets
of 3 vertices. The vertices of triangles ABC and 123,
on the left, are connected to the grid as indicated
by the labels. The red edges can
be dropped (in any order), yielding graphs with
44, 43, 42 edges, also in $\mathcal{F}_v(2^4;J_4;15)$.
In this view we can easily see all 72 symmetries
of the minimal graph.
\end{itemize}

\subsection{Bounds for $F_v(2^5;J_4)$ and $F_v(2,2,3;J_4)$}

Easy bounds on the order of the smallest 6-chromatic
$J_4$-free graph, $F_v(2^5;J_4)$, are implied in prior
work by others on avoiding $K_3$ and $K_4$
(see Table~\ref{tab:k3k4} in Section~\ref{s:background}), namely:
$$
16=F_v(2^5;K_4) \le F_v(2^5;J_4) \le F_v(2^5;K_3) \le 40.
$$
We obtain much better bounds stated in Theorem~\ref{t:22222j4}:
$$
22 \le F_v(2^5;J_4) \le 25
$$
These bounds were computed as follows.
Take $S$ to be the Schl\"afli graph on 27 vertices \cite{Sei79}:
$S$ is a strongly regular graph of degree 16.
Its complement is $J_4$-free and
it has $\chi(\overline{S})=6$.
Removing from $\overline{S}$ any two adjacent
vertices with all incident edges yields
a 25-vertex witness to $F_v(2^5;J_4) \le 25$
(the g6-format of this graph is
{\tt XIPA@CQA\_KEBIIHKBHGicBXB\_w\}auURYbDu\_maULkdQTseOfpp?}).
Removing any further vertices reduces the chromatic number
below 6.
It is interesting to note that this witness graph is the 
unique 6-vertex-critical $(P_6, J_4, K_4)$-induced-free\footnote{There is no induced subgraph isomorphic to a $J_4$, $K_4$, or $P_6$.} graph on 25 vertices~\cite{JanP6}.

For the lower bound, since $R(J_4,K_6)=21$, any 21-vertex
$J_4$-free graph must have an independent set of 6 vertices.
Thus, any graph $G \in \mathcal{F}_v(2^5;J_4;21)$ can be obtained by
adding a 6-independent set (with 6 cones) to one of the 5
graphs in $\mathcal{F}_v(2^4;J_4;15)$. This was verified with
algorithm {\bf A} and no suitable graph was found with $\chi\ge 6$.
Thus $F_v(2^5;J_4)\ge 22$.
This part proves Theorem~\ref{t:22222j4}.

The bounds we have for $F_v(2,2,3;J_4)$ are rather weak, namely
$$F_v(2,3;J_4)+6=20 \le F_v(2,2,3;J_4) \le F_v(3,3;J_4) \le 135.$$
The upper bound follows from Theorem~\ref{t:33j4} and by an easy observation
that for any graph $G$, if
$G \rightarrow (3,3)^v$, then $G \rightarrow (2,2,3)^v$.
For the lower bound, suppose that $G \in \mathcal{F}_v(2,2,3;J_4;k)$.
Note that by Lemma~\ref{l:chromatic}, we must have $\chi(G)\ge 5$.
For $k=19$, since $R(J_4,K_5)=16$, we have $\alpha(G)\ge 5$,
and thus $G$ is a 5-vertex extension of at least one of the 212
graphs in $\mathcal{F}_v(2,3;J_4;14)$. Using again algorithm
{\bf A} we have found no suitable graph $G$, and hence $k>19$.
We attempted to use {\bf A} and other ad-hoc methods
to construct a witness $G$ for $k\ge 20$, but all such
searches failed. The complement of the Schl\"{a}fli graph
$\overline{S}$ does not arrow $(2,2,3)^v$. We also tested
8933 $J_4$-free graphs $G$ on 20 vertices with
$\chi(G)=\alpha(G)=5$ \cite{Rad} and found that
none of them arrows $(2,2,3)^v$.
For comparison with the cases for $K_4$-free graphs,
we note that it is not hard to check that $F_v(2,3;K_4)=7$
with the unique witness graph $K_7-C_7$ (cf.\@ Theorem 3 in \cite{LRU01}),
and it is known that $F_v(2,2,3;K_4)=14$~\cite{CR06}.

Finding any non-obvious bounds for
$F_v(2,3,3;J_4)$ or $F_v(3,3,3;J_4)$ is an interesting challenge which
we pose as a problem to work on.

\subsection{The $J_4$-free process and $F_v(3,3;J_4)$}

The triangle-free process begins with an empty graph of
order $n$, and iteratively adds edges chosen uniformly at
random, subject to the constraint that no triangle is formed.
The triangle-free process has been used to prove that
$$R(3,t) \ge \left({{\frac{1}{4}} + o(1)}\right){\frac{t^2}{\log t}},$$
which currently is
the best known lower bound for $R(3,t)$ obtained
by Bohman and Keevash in 2013/2019~\cite{BohmanKeevash2013a} and
independently by Fiz Pontiveros, Griffiths and Morris
in 2013/2020~\cite{FizGM2013a}.

Similarly to the triangle-free process, the $J_4$-free
process begins with an empty graph of order $n$, and
iteratively adds edges chosen uniformly at random, subject
to the constraint that no $J_4$ is formed.
The asymptotic properties of this process were
analyzed in~\cite{Pic14}.
We implemented the $J_4$-free process in C++ and
generated several graphs for which we then checked
the arrowing property.
The check was done by turning the arrowing property into
a Boolean formula and then using Boolean satisfiability
(SAT) solvers on the resulting formula.
The formula is computed as follows: for every triple
of vertices $(v_1,v_2,v_3)$, if they form a triangle,
we output the disjunctions
$$(\overline{v_1}\lor\overline{v_2}\lor\overline{v_3})\land(v_1\lor v_2\lor v_3)$$

A satisfying assignment for this subformula will
assign at least one of the vertices in $\{v_1,v_2,v_3\}$
to the value $\textsc{False}$ and at least one of them
to the value $\textsc{True}$. Taking $\textsc{False}$
and $\textsc{True}$ to be colors, it is clear
that for a graph $G$ the formula
$$\bigwedge\limits_{\substack{(v_1,v_2,v_3)\in V_G\\\textnormal{\scriptsize s.t. }
G[\{v_1,v_2,v_3\}]\sim K_3}}(\overline{v_1}\lor\overline{v_2}\lor\overline{v_3})
\land(v_1\lor v_2\lor v_3)$$

\noindent
is satisfiable if and only if there is a way to assign colors
to the vertices of $G$ that avoids monochromatic triangles.
We are thus searching for $J_4$-free graphs $G$ that yield
unsatisfiable instances, as these witness the bound
$F_v(3,3;J_4)\leq|V_G|$. The smallest such graph
we were able to find has 135 vertices, thus establishing that
$$F_v(3,3;J_4) \le 135.$$
This part proves Theorem~\ref{t:33j4}.

It is easy to see that $G \rightarrow (3,3)^v$
implies $K_1+G \rightarrow (3,3)^e$, where the graph
$K_1+G$ is obtained from $G$ by adding one new vertex
connected to all of $V_G$. By applying this implication we
can also see that $F_e(3,3;K_1+H) \le F_v(3,3;H)+1$.
The latter inequality is tight for $H=K_4$, as it was shown
that $F_e(3,3;K_5)=15$ and $F_v(3,3;K_4)=14$~(upper bounds in \cite{Nen81}, lower bounds in  \cite{PRU99}).
Now, using similar steps for $H=J_4$, by Theorem~\ref{t:33j4}
we obtain $F_e(3,3;J_5) \leq 136$.
We observe that by the monotonicity with respect
to the avoided graph $H$ we have
$F_e(3,3;K_5) \le F_e(3,3;J_5) \le F_e(3,3;K_4)$.

\medskip
The software development for this project, pilot runs and experiments spanned several months. Now, the final run of everything is doable in about a day in a standard lab with dozen of machines, each with 16 cores.


\section{Remarks and Some Open Problems}

Obtaining good bounds for concrete vertex Folkman numbers is
often difficult. The results presented in this paper are special
for the parameters involving $J_4$ but we hope that they may extend
to other or more general vertex cases. The techniques used here might
be not easily transferable to obtain new bounds on edge Folkman numbers,
not the least because just testing edge-arrowing is typically much more
difficult than testing vertex-arrowing.

\bigskip
We close this paper by posing some related open problems.

\begin{problem}\label{lower}
Give a general lower bound for $F_v(3^r;J_4)$,
or any nonobvious lower bound for $F_v(3,3;J_4)$,
which are not easily implied by known bounds for
other more studied parameters.
\end{problem}

Similarly, we do not know much about the cases
like $F_v(2^r;K_4)$, $F_v(3^r;K_4)$, or $F_v(4^r;J_5)$:
no general methods are known to obtain good lower
or upper bounds.

\begin{problem}
Does there exist a $J_4$-free graph $G$ such that
every set of $|V_G|/2$ vertices induces a triangle? 
\end{problem}

\noindent
If not, this could help in the analysis of $F_v(3,3;J_4)$.
We may consider this problem in a more general case. 
By using density arguments, it is known how to obtain
upper bounds on $F_v(3^r;K_4)$, but not
on $F_v(3,3;J_4)$ or $F_v(3^r;J_4)$.

%
%
%

Except for $F_e(3,4;K_5) \ge 22$ \cite{XS10},
we do not know of any other nonobvious bounds for:
(a) $F_v(3,4;J_5)$, (b) $F_v(4,4;J_5)$, 
(c) $F_e(3,4;K_5)$, or (d) $F_e(4,4;J_5)$.
Case (a) might be solvable with computational methods.
We also think that case (b), while far from easy, is easier
than (c) and much easier than (d). For a
similar case of $F_v(4,4;K_5)$,
the best known bounds were obtained in \cite{Bik18,Fv445}:
$$19 \le F_v(2,2,2,4;K_5) \le F_v(4,4;K_5) \le 23.$$

Finally, we state a related existence problem, which
was already raised in an earlier work \cite{XLR18},
and which is also described at the end of Section 2.

\begin{problem}
{\rm (a)}
$\mathcal{F}_e(3,3;K_1+C_4)=\emptyset?$
\hspace{.3cm}
{\rm (b)}
$\mathcal{F}_e(3,3;\overline{P_2 \cup P_3})=\emptyset?$
\end{problem}
We note that the YES answer to part (a) implies YES answer
to part (b), which eliminates one YES/NO combination
of answers (out
of four possible ones). This problem can be rephrased in
some interesting ways. For example,
part (a) is equivalent to the following question:
{\em Does there exist any $W_5$-free graph
which is not a union of two triangle-free graphs?}

\section*{Acknowledgement}

This research was partially supported by
the National Natural Science Foundation of China (11361008).
David E. Narváez was supported by NSF grant CCF-2030859 to the
Computing Research Association for the CIFellows Project.



\end{document}